\documentclass{coulonpaper}
\setbftrue
\mcgfrenchtrue

\title{Examples of groups whose automorphisms have exotic growth}
\author{R\'emi Coulon}

\begin{document}

\keywords{Growth, automorphisms of groups, geometric group theory, hyperbolic groups, Rips construction}
\subjclass[2010]{
20E36,	
20F28,	
20F65,	
20F67,	
20F06	
}

\maketitle

\begin{abstract}
	In this note we produce examples of outer automorphisms of finitely generated groups which have exotic behaviors in terms of growth of conjugacy classes. 
\end{abstract}

%
\section{Introduction}
%

\paragraph{Growth of outer automorphisms.}
Let $G$ be a finitely generated group.
The outer automorphism group of $G$, which we denote by $\out G$, naturally acts on the set of conjugacy classes of $G$.
It is a standard strategy to study the properties of an automorphism $\Phi \in \out G$ by looking at the dynamics of this action.
Here is an example. 
Endow $G$ with the word metric. 
Given any conjugacy class $c$ of $G$, denote by $\norm c$ the length of the smallest element in $c$.
One may wonder what is the \emph{growth of $\Phi$}, i.e. the asymptotic behavior of the map $\N \to \R$ sending $n$ to $\norm{\Phi^n(c)}$.
It turns out that for various intensively studied groups, one observes a strong \emph{growth dichotomy}.

\begin{theo}
\label{res: growth dichotomy}
	Let $G$ be a finitely generated group in one of the following classes:
	free abelian groups, free groups, fundamental groups of closed compact orientable surfaces, torsion-free hyperbolic groups, torsion-free toric relatively hyperbolic groups.
	Then for every $\Phi \in \out G$, for every conjugacy class $c$ of $G$, the map 
	\begin{equation*}
		\begin{array}{ccc}
		 	\N & \to & \R \\
			n & \to & \norm{\Phi^n(c)}
		\end{array}
	\end{equation*}
	grows either polynomially or at least exponentially.
\end{theo}

In the above statement, the growth type of a map is understood in the following sense: given two functions $f_1, f_2 \colon \N \to \R$, we say that $f_1$ \emph{grows at most like} $f_2$ (or $f_2$ \emph{grows at least like} $f_1$) and we write $f_1 \prec f_2$, if there exists $C > 0$ such that for every $n \in \N$,
\begin{equation*}
	f_1(n) \leq C f_2(n) + C.
\end{equation*}
We say that $f_1$ \emph{grows like} $f_2$ (or $f_1$ and $f_2$ are \emph{equivalent}) and we write $f_1 \asymp f_2$, if $f_1 \prec f_2$ and $f_2 \prec f_1$.
A map $f \colon \N \to \R$ \emph{grows polynomially} (\resp \emph{exponentially}) if it grows like $n \to n^d$, for some exponent $d \in \N$, (\resp $n \to \lambda^n$, for some $\lambda > 1$).

If $G$ is a free abelian group, then $\Phi$ can be seen as a matrix $M$ in $\operatorname{GL}(n, \Z)$ and \autoref{res: growth dichotomy} follows from the Jordan decomposition of $M$.
When $G$ is a surface group, the statement is a consequence of the Nielsen-Thurston classification of mapping classes; see for instance \cite[Theorem~13.2]{Farb:2012ws}.
For non-abelian free groups, a complete classification of the possible growth types of $n \to \norm{\Phi^n(c)}$ has been given by Levitt \cite{Levitt:2009hx}.
It builds on the theory of train-track representatives developed by Bestvina and Handel \cite{Bestvina:1992wa}.
The case of torsion-free hyperbolic and toric relatively hyperbolic groups is handled in \cite{Coulon:2019aa}.

In all these examples the growth type of $\Phi$ directly relates to the algebraic/geometric properties of $\Phi$.
Let us mention one example.
Let $\Sigma$ be a closed compact surface of genus at least $2$ and $G$ its fundamental group.
Let $f \in \mcg\Sigma$ be an element of the mapping class group and $\Phi$ the corresponding outer automorphism of $G$.
Let $\alpha$ be a closed curve on $\Sigma$ and $c$ the associated conjugacy class of $G$.
Roughly speaking, if $n \to\norm{\Phi^n(c)}$ grows exponentially then $\alpha$ crosses an $f$-invariant subsurface on which $f$ behaves as a pseudo-Anosov element.
In addition the exponential growth rate of $n \to\norm{\Phi^n(c)}$ directly relates to the stretching factor of the underlying pseudo-Anosov homeomorphism.
On the other hand, if $n \to \norm{\Phi^n(c)}$ is linear then $c$ lies on a subsurface on which $f$ is equal to a product of Dehn twists along pairwise disjoint curves.

\paragraph{Main results.}
This article goes in the opposite direction.
Our goal is to produce examples of groups where such growth dichotomy fails.
More generally, we try to understand what are the possible growth types for the map $n \to \norm{\Phi^n(c)}$.
In order to state our main result, we first recall the definition of a length function.

\begin{defi}
\label{def: length function}
	A \emph{length function} on a group $H$ is a map $L \colon H \to \N$ with the following properties
    \begin{enumerate}
    	\item $L(h) = 0$ if and only if $h = 1$;
    	\item $L(h) = L(h^{-1})$, for every $h \in H$;
    	\item $L(h_1h_2) \leq L(h_1) + L(h_2)$, for every $h_1,h_2 \in H$;
    	\item there exists $\lambda > 0$ such that for every $r \in \N$, the set $\set{h \in H}{L(h) \leq r}$ contains at most $\lambda^r$ elements.
    \end{enumerate}
\end{defi}

For instance, the map $n \to \abs n ^\alpha$ is a length function on $\Z$, for every $\alpha \in (0,1)$.

\begin{theo}
\label{res: growth types}
	Let $\mathcal L$ be a finite collection of computable length functions on $\Z$.
	There exist a finitely generated group $G$ and an automorphism $\Phi \in \out G$ with the following property.
	For every non-trivial conjugacy class $c$ of $G$, the map
	\begin{equation*}
		\begin{array}{lccc}
			T_c \colon &\Z & \to & \R \\
			& n & \to & \ln \norm{\Phi^n(c)}
		\end{array}
	\end{equation*}
	is equivalent to a linear combination of the elements of $\mathcal L$.
	Conversely for every $L \in \mathcal L$ there exits a conjugacy class $c$ of $G$ such that $T_c$ is equivalent to $L$.
\end{theo}

Recall that there are infinitely many inequivalent computable length functions (actually countably many).
Hence \autoref{res: growth types} provides numerous examples of exotic automorphisms.
As we explain in the last section, our result is more than just a theoretical existence statement.
In many cases, one can build an explicit presentation of such a group $G$.

Unlike in \autoref{res: growth dichotomy}, our techniques are not accurate enough to get a precise behavior of the map $n \to \norm{\Phi^n(c)}$. 
We only control its logarithm.
For instance the last part of \autoref{res: growth types} can be reformulated as follows: for every $L \in \mathcal L$, there exist a conjugacy class $c$ of $G$, $\lambda_1, \lambda_2 > 1$ and $A > 0$ such that for every $n \in \N$, we have
\begin{equation*}
	\frac 1A \lambda_1^{L(n)} \leq \norm{\Phi^n(c)} \leq A \lambda_2^{L(n)}.
\end{equation*}
Nevertheless, as we explain below, the map $n \to \ln \norm{\Phi^n(c)}$ has a natural interpretation in terms of the Lipschitz distance on $\out G$.

\medskip
In the second part of our work, we stop focusing on a single outer automorphism and adopt a more global point of view.
The word metric on a finitely generated group $G$ induces an asymmetric left-invariant pseudo-metric on $\out G$, called the \emph{Lipschitz metric}, and defined as follows:
for every $\Phi_1, \Phi_2 \in \out G$,
\begin{equation*}
	\dist[\rm Lip]{\Phi_1}{\Phi_2} = \ln \left( \sup_c \frac{\norm{\Phi_2^{-1} (c)}}{\norm {\Phi_1^{-1}(c)}} \right)
\end{equation*}
where $c$ runs over all non-trivial conjugacy classes of $G$.
This distance is directly inspired by the Thurston metric on the Teichmüller space \cite{Thurston:1998aa} and the Lipschitz metric on the Culler-Vogtmann outer space \cite{Francaviglia:2011jj, Meinert:2015ce}.

\begin{rema*}\
	\begin{itemize}
		\item Let $S$ and $S'$ be two generating sets of $G$.
		It is well-known that the corresponding word metrics on $G$ are bi-Lipschitz equivalent.
		Thus the associated Lipschitz distances on $\out G$ are quasi-isometric.
		As we work up to quasi-isometry, the choice of a generating set for $G$ does not really matter.

		\item 
		Note that if $\out G$ is finitely generated, the Lipschitz metric $\distV[\rm Lip]$ is a priori not quasi-isometric to the word metric $\distV$. 
		Consider for instance the free group $G = \F(a,b)$.
		Let $\phi$ be the automorphism characterized by 
		\begin{equation*}
			\begin{array}{lrrl}
				\phi \colon & \F(a,b) & \to & \F(a,b) \\
				& a & \to & a \\
				& b & \to & ba 
			\end{array}
		\end{equation*}
		and $\Phi$ its outer class.
		Note that $\Phi$ can be interpreted as a Dehn twist on a puncture torus.
		It follows that $n \to \dist[\rm Lip] 1{\Phi^n}$ grows logarithmically.
		However $\out G = {\rm GL}_2(\Z)$ is hyperbolic, thus $n \to \dist 1{\Phi^n}$ grows linearly.
		Hence $\distV[\rm Lip]$ and $\distV$ are not quasi-isometric.
	\end{itemize}
\end{rema*}

It is known that any finitely presented group $Q$ is isomorphic to $\out G$ for some suitable finitely generated group $G$; see for instance Matumoto \cite{Matumoto:1989wg}.
The next statement can be seen as a geometric analogue of this realization result.

\begin{theo}
\label{res: qi}
	For every finitely presented group $Q$, there exists a finitely generated group $G$ such that $\out G$ endowed with the Lipschitz metric is quasi-isometric to $Q$.
\end{theo}

\paragraph{Strategy.}
Our work relies on the Rips construction \cite{Rips:1982co}. 
If $Q$ is a finitely presented group, Rips explains how to build a non-elementary hyperbolic group $G$ such that $Q$ is the quotient of $G$ by a \emph{finitely generated} normal subgroup $N$.
The action of $G$ on $N$ by conjugation defines a map $G \to \aut N$ which induces a homomorphism $\chi \colon Q \to \out N$.
It turns out that this map is a quasi-isometric embedding, provided $N$ is not finite.
The proof has two steps.
\begin{enumerate}
	\item Classical arguments show that the word metric on $Q$ \og dominates \fg the Lipschitz metric on $\out N$ (\autoref{res upper bound}).
	\item Building on the fact that the divergence function in a hyperbolic space is exponential, one provides an estimate from below of the Lipschitz metric (\autoref{res: lower bound}).
\end{enumerate}
Once the relation between the word metric on $Q$ and the Lipschitz metric on $\out N$ is established, we vary the group $Q$ to get the above theorems:
\begin{itemize}
	\item According to Ol'shanski\u\i\ \cite{Olshanskii:1997ga}, for any computable length function $L$ on $\Z$, there exist a finitely presented group $Q$ and an element $q$ such that $L$ is equivalent to the word metric of $Q$ restricted to $\group q$.
	If $\group q$ is distorted in $Q$, then the corresponding automorphism $\Phi = \chi(q)$ has an exotic growth type, which yields \autoref{res: growth types}.
	\item Bumagin and Wise proved that the Rips construction can be refined so that the map $\chi \colon Q \to \out N$ is actually an isomorphism \cite{Bumagin:2005fr}, leading to \autoref{res: qi}.
\end{itemize}

\paragraph{Acknowledgment.}
The author is grateful to the \emph{Centre Henri Lebesgue} ANR-11-LABX-0020-01 for creating an attractive mathematical environment.
He acknowledges support from the Agence Nationale de la Recherche under Grant \emph{Dagger} ANR-16-CE40-0006-01.

He warmly thanks Gilbert Levitt, Arnaud Hilion, and Camille Horbez for cheerful and stimulating discussions around this work.

%
\section{Exploiting a short exact sequence}
%
\label{sec: short exact sequence}

\begin{nota*}
	Let $G$ be a group and $\distV$ a left-invariant (pseudo-)distance on $G$.
	In order to lighten the notations, for every $g \in G$, we write $\abs g = \dist 1g$ for the \emph{length} of $g$.
	It induces a \emph{norm} on the set of conjugacy classes of $G$: if $c$ is a conjugacy class, $\norm c$ is the length of the shortest element in $c$.
	Given $g \in G$, we make an abuse of notation and write also $\norm g$ for the norm of the conjugacy class of $g$.
	
	In the course of the article, we will work with several metrics. 
	If one needs to avoid ambiguity, we will use indices of the form $\distV[*]$ to distinguish between them.
	The associated length function and norm will be then denoted by $\absV[*]$ and $\normV[*]$ respectively.
\end{nota*}

We now fix once for all a short exact sequence of groups
\begin{equation*}
	1 \to N \to G \to Q \to 1
\end{equation*}
where $N$ and $Q$ are finitely generated, whereas $G$ is non-elementary hyperbolic in the sense of Gromov \cite{Gromov:1987tk}.
We endow $N$ and $Q$ with the word metric.
The distance on $N$ induces a norm on the set of conjugacy classes in $N$, denoted by $\normV$.
The action by conjugation of $G$ on $N$ defines a map $G \to \aut N$, which induces a morphism $\chi \colon Q \to \out N$.
The goal of this section is to prove the following statement.

\begin{prop}
\label{res: qi embedding}
	If $N$ is infinite, then the map $\chi \colon Q \to \out N$ is a quasi-isometric embedding.
	More precisely, there exist $\kappa \geq 1$ and $\ell \geq 0$ such that for every $q,q' \in Q$,
	\begin{equation*}
		\kappa^{-1} \dist {q'}q - \ell \leq \dist[\rm Lip]{\chi(q')}{\chi(q)} \leq \kappa \dist {q'}q.
	\end{equation*}
\end{prop}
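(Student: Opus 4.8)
The plan is to establish the two inequalities separately, exactly matching the two-step strategy announced in the introduction. For the upper bound, I will show that the word metric on $Q$ dominates the Lipschitz metric on $\out N$. By left-invariance of all the metrics involved (the word metrics on $N$ and $Q$ and the induced Lipschitz metric on $\out N$), it suffices to bound $\dist[\rm Lip]{1}{\chi(q)} = \abs[\rm Lip]{\chi(q)}$ in terms of $\abs q$. Fix a lift $\tilde q \in G$ of each generator of $Q$; then for a word $w$ of length $\abs q$ in the generators of $Q$, conjugation by the corresponding lift in $G$ moves each generator of $N$ a bounded amount in the word metric on $N$ (since $N$ is normal and finitely generated, each conjugate $\tilde q\, s\, \tilde q^{-1}$ of a generator $s$ of $N$ lies in $N$ and has bounded length). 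Iterating, conjugation by a length-$\abs q$ element distorts lengths in $N$ by at most a multiplicative constant to the power $\abs q$, i.e.\ $\norm{\chi(q)(c)} \leq K^{\abs q} \norm c$ for a fixed $K \geq 1$; taking logarithms and a supremum over $c$ gives $\abs[\rm Lip]{\chi(q)} \leq (\ln K) \abs q$, which is the right-hand inequality with $\kappa = \ln K$ (after enlarging $\kappa$ if needed). This step is routine and only uses that $N$ is finitely generated and normal in the finitely generated group $G$; I do not expect it to cause trouble.

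The lower bound is the substantive part. Here I want to produce, for each $q \in Q$ with $\dist 1q$ large, a conjugacy class $c$ of $N$ whose length gets stretched by $\chi(q)$ by a factor that is exponential in $\abs q$, so that $\abs[\rm Lip]{\chi(q)} \geq \kappa^{-1}\abs q - \ell$. The mechanism, as the introduction indicates, is the exponential divergence of geodesics in the hyperbolic group $G$. Concretely: pick a lift $g \in G$ of $q$ that is a shortest representative, so $\abs g$ is comparable to $\abs q$ (up to the bounded word length of the generators of $N$, or rather up to quasi-isometry of $G \twoheadrightarrow Q$ with uniformly bounded fibers only when $N$ is finite — since $N$ is infinite one must instead observe that the projection $G \to Q$ is $1$-Lipschitz and coarsely surjective, giving $\abs q \leq \abs g$ for a suitable lift). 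Since $N$ is infinite and normal in the non-elementary hyperbolic group $G$, $N$ contains an element $h$ acting hyperbolically on the Cayley graph of $G$ (in fact $N$, being an infinite normal subgroup, contains many loxodromic elements and even a loxodromic whose axis we can position). The key geometric computation is to estimate the word length in $N$ of the conjugate $g h^k g^{-1}$, or of an appropriate commutator, and compare it to that of $h^k$: the element $g h^k g^{-1}$ corresponds in the Cayley graph of $G$ to a path that first travels distance $\abs g$ along $g$, then $k \abs h$-ish along a translate of the axis of $h$, then back; a shortest representative of its conjugacy class, measured in the \emph{word metric of $N$}, is forced to be long because any word in the generators of $N$ realizing this element traces a path in $G$ that is ``pushed out'' by $g$, and divergence in hyperbolic spaces is exponential — so its $N$-length is at least exponential in $\abs g$. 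Unwinding, $\norm{\chi(q^{-1})(c)} / \norm c \geq \lambda^{\abs g}$ for the conjugacy class $c$ of $h^k$ with $k$ chosen appropriately, which after taking logarithms yields the claimed linear lower bound on $\dist[\rm Lip]{1}{\chi(q)}$ in terms of $\abs q$, with the additive constant $\ell$ absorbing the discrepancies between $\abs g$ and $\abs q$ and small-$\abs q$ cases.

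The main obstacle is making the divergence argument precise: one needs a quantitative statement that a loxodromic element $h \in N$ conjugated by $g$ cannot be represented by a short word over $N$'s generators, and this requires controlling how the $N$-word metric sits inside the $G$-word metric together with a genuine lower bound from the exponential divergence function of the hyperbolic Cayley graph of $G$. I expect this is where a careful choice of $h$ (with controlled axis and translation length, e.g.\ via the classification of loxodromics or by passing to a high power) and an estimate of the form ``a path in $\mathrm{Cay}(G)$ that stays in the $N$-coset structure while realizing $g h^k g^{-1}$ must have length $\succeq \lambda^{\abs g} k$'' will be needed; this is stated separately as the lower-bound proposition (\autoref{res: lower bound}) and its proof is the technical heart of the paper. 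Everything else — bi-Lipschitz invariance under change of generating set, left-invariance, passing between $G$ and $Q$ — is bookkeeping.
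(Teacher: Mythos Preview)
Your proposal is correct and follows the same two-step route as the paper: the upper bound is exactly the subadditivity argument of Proposition~\ref{res upper bound}, and the lower bound is Proposition~\ref{res: lower bound}, which fixes an infinite-order element $u \in N$ and uses exponential divergence in the Cayley graph of $G$ to show that any $N$-word representing a shortest element of $\chi(q)(c)$ must have length at least $A\lambda^{\abs q - \mathrm{const}}$. The only cosmetic difference is that the paper uses a single fixed conjugacy class $c_0$ working uniformly for all $q$, so your auxiliary parameter $k$ is unnecessary --- just take $k=1$.
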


We start with a very general fact which does not require any kind of negative curvature.
\begin{prop}
\label{res upper bound}
	There exists $\kappa > 0$ such that for every $q,q' \in Q$, 
	\begin{equation}
	\label{eqn: upper bound}
		\dist[\rm Lip]{\chi(q')}{\chi(q)} \leq \kappa \dist {q'}q.
	\end{equation}
\end{prop}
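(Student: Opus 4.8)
The plan is to reduce the inequality to an estimate for a single element of $Q$, and then to use the elementary fact that conjugation inside $G$ distorts the word length in $N$ by at most an exponential amount. No hyperbolicity enters here: only the normality and finite generation of $N$, together with the finite generation of $Q$, are used.

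\emph{Reduction.} For $p \in Q$ set $\Lambda(p) = \sup_c \norm{\chi(p)(c)}/\norm c$, the supremum running over the non-trivial conjugacy classes of $N$. Since $\chi(q')$ is a bijection on the set of such classes, the substitution $c = \chi(q')(c')$ in the supremum defining $\distV[\rm Lip]$, together with the fact that $\chi$ is a homomorphism, gives
\begin{equation*}
	\dist[\rm Lip]{\chi(q')}{\chi(q)} = \ln \left( \sup_{c'} \frac{\norm{\chi(q)^{-1}\chi(q')(c')}}{\norm{c'}} \right) = \ln \Lambda(q^{-1}q').
\end{equation*}
As $\abs{q^{-1}q'} = \dist{q'}{q}$ for the word metric on $Q$, it is enough to produce a constant $D \geq 2$ such that $\Lambda(p) \leq D^{\abs p}$ for every $p \in Q$; one then takes $\kappa = \ln D$. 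As a by-product this shows $\Lambda(p) < \infty$, so that $\distV[\rm Lip]$ indeed takes finite values on the image of $\chi$.

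\emph{The exponential bound.} Fix a finite generating set $S$ of $N$ and, in $G$, a lift of each generator of $Q$; let $T$ be the finite set consisting of these lifts and their inverses. For $t \in T$ and $s \in S$ the element $tst^{-1}$ lies in $N$ because $N$ is normal, so we may let $D$ be the largest of the word lengths $\abs{tst^{-1}}$ in $N$ over the finitely many pairs $(t,s)$, enlarged so that $D \geq 2$. Given $p \in Q$, write it as a geodesic word $\bar t_1 \cdots \bar t_k$ with $k = \abs p$ and lift this to $g = t_1 \cdots t_k \in G$ with each $t_i \in T$; then $g$ projects to $p$ and $\chi(p)$ is the outer class of conjugation by $g$. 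For a non-trivial conjugacy class $c$ of $N$, choose a shortest representative $n$, so $\abs n = \norm c$. Conjugating $n$ by the letters $t_k, t_{k-1}, \dots, t_1$ of $g$ one at a time, and writing each intermediate element of $N$ as a word over $S^{\pm 1}$ before conjugating by the next letter, the choice of $D$ shows that each such conjugation multiplies the $S$-length by at most $D$; a straightforward induction on $k$ then gives $\norm{\chi(p)(c)} \leq \abs{gng^{-1}} \leq D^{k}\abs n = D^{\abs p}\norm c$. Taking the supremum over $c$ yields $\Lambda(p) \leq D^{\abs p}$, which is what was required.

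The argument is essentially bookkeeping rather than substance, so the steps I would be most careful about are the inverses in the definition of $\distV[\rm Lip]$ together with the reindexing of its supremum, and the fact that the $N$-conjugacy class of $gng^{-1}$ depends only on $p$ and on the conjugacy class of $n$. The latter holds because two lifts of $p$ differ by an element of $N$ and conjugation by an element of $G$ carries $N$-conjugate elements to $N$-conjugate elements; this is precisely what makes $\chi(p)$, hence $\Lambda(p)$, well defined on conjugacy classes. I do not anticipate any genuine obstacle beyond getting these routine verifications straight.
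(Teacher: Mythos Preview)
Your proof is correct and follows essentially the same route as the paper's: reduce to a single element by left-invariance, then bound the Lipschitz length of $\chi(q)$ by a constant times $\abs q$ via a geodesic word for $q$. The only difference is packaging: the paper invokes the triangle inequality for $\distV[\rm Lip]$ and sets $\kappa = \max_s \abs[\rm Lip]{\chi(s)}$, whereas you unwind this into the explicit letter-by-letter conjugation bound $\Lambda(p)\leq D^{\abs p}$, which is the same inequality after taking logarithms.
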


\begin{proof}
	Since both metrics on $Q$ and $\out N$ are left-invariant, it suffices to prove the inequality when $q' = 1$.
	Let $S$ be the generating set of $Q$ defining the word metric.
	We let 
	\begin{equation*}
		\kappa = \max_{s \in S \cup S^{-1}} \abs[\rm Lip]{\chi(s)}.
	\end{equation*}
	Let $q \in Q$.
	We decompose $q$ as a geodesic word $q = s_1s_2\dots s_m$ over the alphabet $S \cup S^{-1}$, i.e. $m = \abs q$.
	It follows from the triangle inequality that
	\begin{equation*}
		\abs[\rm Lip] {\chi(q)} 
		\leq \sum_{k=1}^m\dist[\rm Lip]{\chi(s_1\dots s_{k-1})}{\chi(s_1\dots s_k)}
		\leq \sum_{k=1}^m\abs[\rm Lip]{\chi(s_k)}
		\leq \kappa m. \qedhere
	\end{equation*}
\end{proof}

In order to bound from below the Lipschitz metric we start by studying the behavior of a single conjugacy class under the action of $\out G$.

\begin{prop}
\label{res: lower bound}
	There exists $\kappa > 0$ with the following property.
	Let $c$ be a conjugacy class of $N$, whose elements have infinite order.
	Then there exists $\ell \geq 0$, such that for all $q \in Q$, we have
	\begin{equation*}
		\ln \norm {\Phi(c)} \geq \kappa \abs q - \ell,
		\quad \text{where} \quad
		\Phi = \chi(q).
	\end{equation*}
\end{prop}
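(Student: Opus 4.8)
The plan is to transport the question into the Cayley graph of $G$ and feed it into the exponential lower bound for the divergence of a hyperbolic space. Fix a finite generating set $S$ of $G$, give $Q$ the word metric of the image $\bar S$ of $S$, and give $N$ the word metric of some finite generating set $T$; put $C = \max_{t \in T \cup T^{-1}} |t|_G$. Write $\Phi = \chi(q)$ and fix a representative $g \in N$ of $c$. Three elementary remarks will be used. (i) Every $x \in N$ satisfies $|x|_G \le C|x|_N$, and a shortest $T$-word for $x$ spells out a path in the Cayley graph of $G$ from $1$ to $x$ of length at most $C|x|_N$ that stays in the $C$-neighbourhood of the vertex set $N$. (ii) For any lift $\gamma \in G$ of $q$, left-invariance of the metric together with the fact that $\gamma^{-1}N$ is exactly the set of lifts of $q^{-1}$ shows, the word metric of $Q$ being the quotient metric, that $d_G(\gamma, N) = |q^{-1}|_Q = |q|$; in particular $|\gamma|_G \ge |q|$. (iii) The conjugacy class $\Phi(c)$ equals $\{\,\gamma g \gamma^{-1} : \gamma \text{ a lift of } q\,\}$, so $\|\Phi(c)\|_N = \min\{\,|\gamma g \gamma^{-1}|_N : \gamma \text{ a lift of } q\,\}$. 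It therefore suffices to find $a > 1$ depending only on $G$, and $A \ge 1$ depending on $c$, with $|\gamma g \gamma^{-1}|_N \ge a^{|q|}/A$ for every lift $\gamma$ of $q$; taking logarithms then gives the statement with $\kappa = \ln a$ and $\ell = \ln A$ (after enlarging $\ell$ to absorb the finitely many bounded values of $|q|$; note $\Phi(c)$ is non-trivial, so $\|\Phi(c)\|_N \ge 1$).

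Fix a lift $\gamma$ and set $y = \gamma g \gamma^{-1} \in N$. Because $g$ has infinite order in the hyperbolic group $G$, the map $j \mapsto \gamma g^j$ is a quasi-geodesic whose $G$-length tends to infinity with $|j|$; replacing $\gamma$ by $\gamma g^{j_0}$ for a minimizing $j_0$ changes neither $y$ (as $g^{j_0}$ commutes with $g$) nor the image $q$, so I may assume $|\gamma g^j|_G \ge |\gamma|_G =: d$ for all $j \in \Z$, whence $d = d_G(1, \{\gamma g^j\}_j) \ge |q|$. Since $y \cdot \gamma g^j = \gamma g^{j+1}$, the set $\{\gamma g^j\}_j$ is, up to bounded Hausdorff distance, a quasi-axis of the hyperbolic element $y$. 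The standard estimate for the displacement function of a hyperbolic isometry yields $|y|_G = d_G(1, y \cdot 1) \ge 2\, d_G(1, \{\gamma g^j\}_j) - O(\delta) = 2d - O(\delta)$, where $\delta$ is a hyperbolicity constant of $G$; combined with $|y|_G \le 2d + |g|_G$ this gives $|y|_G = 2d + O(1)$. Together with $d_G(1,\gamma) = d$ and $d_G(\gamma, y \cdot 1) = |\gamma g^{-1}|_G \in [d, d + |g|_G]$, the thin-quadrilateral estimate shows that $\gamma$ lies within $O(1)$ of any geodesic $[1, y \cdot 1]$; pick a point $p$ on such a geodesic with $d_G(p, \gamma) = O(1)$. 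Then $d_G(1, p), d_G(y \cdot 1, p) \ge d - O(1) \ge |q| - O(1)$, while by (ii) one has $d_G(p, N) \ge d_G(\gamma, N) - O(1) = |q| - O(1)$.

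Now let $P$ be the path from $1$ to $y$ spelled by a shortest $T$-word for $y$, as in (i): it has length at most $C|y|_N$ and lies in the $C$-neighbourhood of $N$, hence avoids the ball of radius $|q| - O(1)$ about $p$. But $1$ and $y \cdot 1$ are the endpoints of the geodesic through $p$, and $p$ is at distance at least $|q| - O(1)$ from both. Since $G$ is non-elementary hyperbolic, its divergence is at least exponential: there are constants $a > 1$, $A_0 \ge 1$, $b \ge 0$ depending only on $G$ such that any path joining the endpoints $u, v$ of a geodesic and avoiding the ball $B(w, r)$ about a point $w$ of that geodesic with $r + b \le \min(d_G(u,w), d_G(v,w))$ has length at least $a^r/A_0$. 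Applied to $P$ with $r = |q| - O(1)$, this gives $C|y|_N \ge \mathrm{length}(P) \ge a^{|q| - O(1)}/A_0$, i.e. $|y|_N \ge a^{|q|}/A$ for a constant $A$ absorbing $C$, $A_0$ and the additive losses (which depend only on $G$, on $|g|_G$ and on the translation length of $g$). Minimizing over all lifts $\gamma$ of $q$ gives $\|\Phi(c)\|_N \ge a^{|q|}/A$, and taking logarithms finishes the proof.

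The heart of the argument — and the only place curvature is used — is the divergence step: the conjugator $\gamma$ can be arranged to sit at $G$-distance exactly $|q|$ from the subgroup $N$ while hugging a $G$-geodesic from $1$ to $y$, so the $N$-word path $P$, confined near $N$, is forced to make an exponentially long detour around the ball $B(p, |q|)$, and this detour bounds $|y|_N$ from below. Making this precise is routine but requires keeping careful track of the additive constants in the three hyperbolic-geometry inputs — the displacement estimate, the thin-quadrilateral estimate placing $\gamma$ near $[1, y \cdot 1]$, and the divergence inequality — and checking that they are uniform over every lift $\gamma$ and every non-trivial $q$ (the case of finite $Q$, where $|q|$ is bounded, being trivial). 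I expect this bookkeeping, rather than any conceptual point, to be the main effort.
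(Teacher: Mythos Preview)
Your proof is correct and follows essentially the same route as the paper's: both arguments arrange the conjugator to sit on (a translate of) the geodesic between the endpoints, observe that the $N$-word path is confined near the coset $N$ at $G$-distance $|q|$ from that point, and feed this into the exponential divergence of the hyperbolic Cayley graph. The only cosmetic differences are that the paper translates everything by $h^{-1}$ (so it works with the geodesic $[h^{-1},uh^{-1}]$ passing near $1$ rather than with $[1,y]$ passing near $\gamma$) and that it proves the ``conjugator lies near the geodesic'' step by hand via the quasi-axis $A(u)$ and a minimality contradiction, whereas you package the same computation as the standard displacement estimate for a loxodromic isometry.
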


\begin{proof}
	We start by setting up all the useful objects for this proof.
	Let $S_N$ and $S_Q$ be the respective generating sets of $N$ and $Q$ used to define the word metric $\distV[N]$ and $\distV[Q]$ on these groups.
	We fix a generating set $S_G$ of $G$ which contains $S_N$ and whose image in $Q$ lies in $S_Q$, and endow $G$ with the corresponding word metric $\distV[G]$ (or simply $\distV$).
	It follows that the maps $N \into G$ and $G \onto Q$ are $1$-Lipschitz.
		
	Let $X$ (\resp $Y$) be the Cayley graph of $G$ (\resp $N$) with respect to $S_G$ (\resp $S_N$).
	We identify the vertex set of $X$ with $G$.
	Although it may not be unique, given two points $x,y \in X$, we write $\geo xy$ for some geodesic joining them.
	Since $S_N$ is contained in $S_G$, there exists a (unique) $1$-Lipschitz $N$-equivariant graph embedding $\iota \colon Y \into X$ sending the identity of $N$ to the one of $G$.
	For simplicity we don't make any distinction (as a set) between $Y$ and its image in $X$.

	By assumption the group $G$ is hyperbolic, i.e. there exists $\delta \in \R_+$, such that  every geodesic triangle in $X$ is $\delta$-thin, or equivalently for every $x,y,z \in X$, the geodesic $\geo xz$ lies in the $\delta$-neighborhood of $\geo xy \cup \geo yz$.
	Without loss of generality we can assume that $\delta>1$.
	We refer the reader to Gromov's seminal paper \cite{Gromov:1987tk} or \cite{Coornaert:1990tj, Ghys:1990ki, Bridson:1999ky} for the background on hyperbolic geometry.
	The next statement is a straightforward reformulation of \cite[Chapter III.H, Proposition~1.6]{Bridson:1999ky} expressing that the divergence function in a hyperbolic space is exponential.

	\begin{lemm}
	\label{res: div}
		There exist $A > 0$ and $\lambda > 1$ with the following property.
		Let $x,y \in X$ and $p \in \geo xy$.
		Let $r \geq 0$.
		Let $\gamma$ be a rectifiable path from $x$ to $y$.
		If $\gamma$ does not intersect the ball $B(p,r)$, then its length is at least $A \lambda^r$.
	\end{lemm}
		
	Let $c$ be a conjugacy class of $N$ whose elements have infinite order.
	We choose an element $u \in N$ in $c$ with minimal length in $N$, i.e. $\abs[N] u = \norm[N]c$.
	It follows from the Morse Lemma, that the set
	\begin{equation*}
		A(u) = \set{x \in X}{\dist x{ux} \leq \abs[G] u + 4\delta},
	\end{equation*}
	is quasi-isometric to a line, on which the group $\group u$ roughly acts by translations.
	Compare for instance with \cite[Chapitre~10, §~7]{Coornaert:1990tj}.
	In particular, there exists $a \geq 0$, such that the diameter of $A(u) / \group u$ is less than $a$.
	Without loss of generality, we can assume that $a > \abs[G]u + 2\delta$.

	Let $q \in Q$ and $\Phi = \chi(q)$.
	We fix $v \in N$ in the conjugacy class $\Phi(c)$ such that $\abs[N]v = \norm[N]{\Phi(c)}$.
	It follows from the definition of $\chi \colon Q \to \out N$, that there exists a pre-image $h \in G$ of $q$ such that $v = h u h^{-1}$.
	We fix such an element $h$ which minimizes $\abs[G]h$.

	\begin{lemm}
	\label{res: gamma(t) close to z/uz}
		There exists a point $x \in \geo{uh^{-1}}{h^{-1}}$ such that $\dist 1x \leq a + 2\delta$.
	\end{lemm}

	\begin{proof}
		If $\dist 1{h^{-1}} \leq a$, one simply takes $x = h^{-1}$.
		Suppose that $\dist 1{h^{-1}} > a$.
		We write $z$ for the point on a geodesic $\geo 1{h^{-1}}$ such that $\dist 1z = a$ (see \autoref{fig: main fig}).
		\begin{figure}[htbp]
		\begin{center}
			\includegraphics[width=\textwidth]{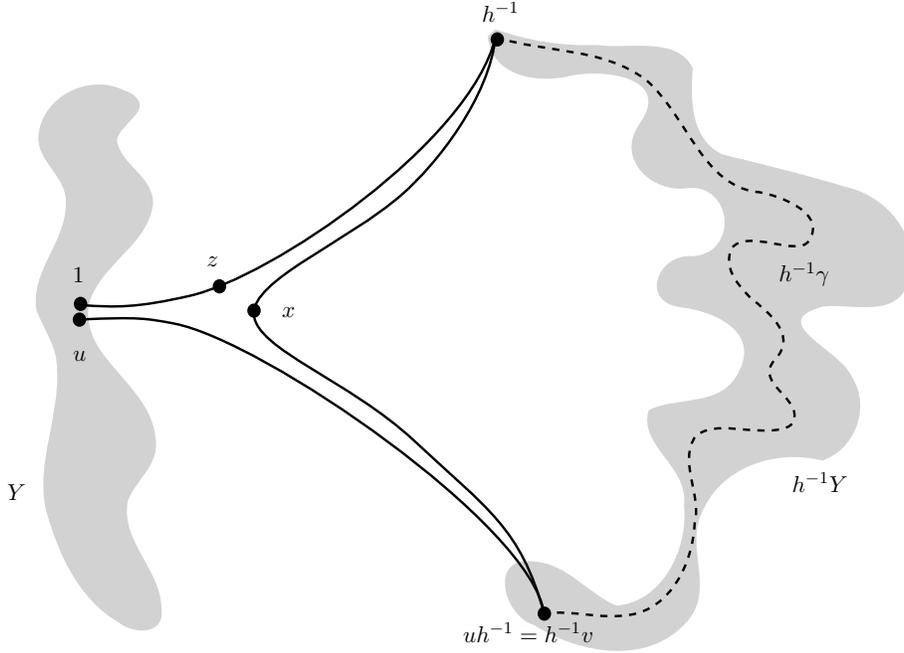}
		\caption{The geodesics $\geo 1{h^{-1}}$, $\geo u{uh^{-1}}$ and $\geo{h^{-1}}{uh^{-1}}$ are the solid lines whereas $h^{-1}\gamma$ is the dashed path.}
		\label{fig: main fig}
		\end{center}
		\end{figure}
		It suffices to show that $z$ is $2\delta$-close to $\geo{uh^{-1}}{h^{-1}}$.
		Recall that geodesic triangles in $X$ are $\delta$-thin.
		Hence $z$ is $2\delta$-close to one of the following geodesics $\geo 1u$, $\geo u{uh^{-1}}$, or $\geo{uh^{-1}}{h^{-1}}$.
		Let us rule out the first two cases.
		Assume that $z$ is $2\delta$-close to $\geo 1u$.
		It follows from the triangle inequality that $a \leq \dist 1z \leq \abs u + 2\delta$,
		which contradicts our choice of $a$.
		On the other hand, if $z$ is $2\delta$-close to a point $y$ on $\geo u{uh^{-1}}$, the triangle inequality yields
		\begin{align*}
			\dist{uz}z
			\leq \abs{\dist u{uz} - \dist uy} + 2\delta
			& \leq \abs{\dist 1z - \dist uy} + 2\delta \\
			& \leq \dist 1u + \dist zy + 2\delta.
		\end{align*}
		Consequently $\dist {uz}z \leq \abs u + 4\delta$, i.e. $z$ belongs to $A(u)$.
		According to our choice of $a$, there exists $m \in \Z$, such that $\dist 1{u^m z} < a \leq \dist 1z$.
		We let $h_0 = hu^{-m}$.
		Note that $h$ and $h_0$ have the same image in $Q$ whereas $h_0 uh_0^{-1} = huh^{-1} = v$.
		Moreover, it follows from the triangle inequality that 
		\begin{equation*}
			\abs{h_0}
			\leq \dist 1{u^mz} + \dist {u^m z}{u^mh^{-1}}
			< \dist 1z + \dist z{h^{-1}}
			\leq \abs{h},
		\end{equation*}
		which contradicts our definition of $h$.
	\end{proof}

	Let $\gamma \colon \intval ab \to Y$ be a geodesic of $(Y, \distV[N])$ joining $1$ to $v$.
	We identify $\gamma$ with its image in $X$.
	As a path of $X$, $\gamma$ is no more geodesic. 
	Nevertheless its length is still ${\sf Length}(\gamma) = \norm[N]{\Phi(c)}$.
	
	\begin{lemm}
	\label{res: dist p alpha}
		The path $h^{-1}\gamma$ does not intersect the ball of radius $r = \abs[Q] q - a - 3\delta$ centered at $x$.
	\end{lemm}
	
	\begin{proof}
		By construction $h\gamma^{-1}$ is contained in $h^{-1}Y$.
		Thus
		\begin{equation*}
			\dist hY \leq \dist 1{h^{-1}Y} \leq \dist 1x + \dist x{h^{-1}Y} \leq \dist x{h^{-1} \gamma} + a + 2\delta.
		\end{equation*}
		Hence it suffices to prove that $\dist hY \geq \abs[Q] q - \delta$.
		Recall that $Y$ is an embedded copy of the Cayley graph of $N$ in $G$.
		If $p$ is a point in $Y$, there exist $w \in N$ seen as a vertex of $Y$ such that $\dist pw \leq \delta$.
		Observe that $w^{-1}h$ is a pre-image of $q$.
		Since the projection $G \onto Q$ is $1$-Lipschitz, we get $\abs[Q]q  \leq \dist hw \leq \dist h{p} + \delta$.
		This inequality holds for every $p \in Y$ which completes the proof.
	\end{proof}
	
	Since $v = huh^{-1}$, the path $h^{-1} \gamma$ joins $h^{-1}$ to $uh^{-1}$ and avoid the ball centered at $x$ of radius $r$.
	It follows from \autoref{res: div} that 
	\begin{equation*}
		\norm[N]{\Phi(c)} 
		= {\sf Length}(\gamma) 
		= {\sf Length}(h^{-1} \gamma) \geq A\lambda ^r.
	\end{equation*}
	It can be rewritten $\ln \norm[N]{\Phi(c)} \geq \kappa \abs[Q]q - \ell$, where
	\begin{equation*}
		\kappa = \ln \lambda
		\quad \text{and} \quad
		\ell = (a + 3\delta)\ln \lambda - \ln A
	\end{equation*}
	do not depend on $q$.
	Note also that $\kappa$ does not depend on $c$.
	Hence the proof is complete.
\end{proof}

\begin{proof}[Proof of \autoref{res: qi embedding}]
	As in \autoref{res upper bound}, it suffices to prove the inequalities for $q' = 1$.
	The second inequality is provided by \autoref{res upper bound}.
	Let us focus on the first one.
	Since $N$ is infinite, it contains a conjugacy class $c_0$ whose elements have infinite order \cite[Chapitre~8, Corollaire~36]{Ghys:1990ki}.
	By \autoref{res: lower bound}, there exist $\kappa > 0$ and $\ell \geq 0$ such that for every $q \in Q$, 
	\begin{equation*}
		\ln \norm{\Phi(c_0)} \geq \kappa \abs q - \ell,
		\quad \text{where} \quad
		\Phi = \chi(q).
	\end{equation*}
	In particular, 
	\begin{equation*}
		\abs[\rm Lip]{\chi(q)}
		\geq \ln \left( \frac{\norm{\Phi(c_0)}}{\norm {c_0}}\right)
		\geq \kappa \abs q - \left( \ell + \ln \norm {c_0} \right).\qedhere
	\end{equation*}
\end{proof}

%
\section{Applications}
%
\label{sec: applications}

Our applications combine two ingredients: the Rips construction and the existence of groups with distorted elements.

\paragraph{Rips construction.}
In order to produce short exact sequences as studied in \autoref{sec: short exact sequence}, we use the Rips construction.

\begin{theo}[Rips \cite{Rips:1982co}]
	Let $Q$ be a finitely presented group. 
	There exists a short exact sequence
	\begin{equation*}
		1 \to N \to G \to Q \to 1
	\end{equation*}
	where $G$ is a hyperbolic group and $N$ a finitely generated non-elementary normal subgroup of $G$.
\end{theo}

Rips construction is very flexible.
In particular, one can strengthen the conclusions by adding one of the following requirements (some of them are incompatible, hence they cannot be all simultaneously satisfied):
\begin{itemize}
	\item The group $G$ is torsion-free; see Rips \cite{Rips:1982co}.
	\item The group $N$ is the quotient of a prescribed non-elementary hyperbolic group. In particular one can choose $N$ to have Kazhdan's property (T); see Belegradek-Osin \cite{Belegradek:2008cy}.
	\item The group $G$ and thus $N$ is residually finite; see Wise \cite{Wise:2003bc}.
	Actually the group $G$ is obtained by mean of small cancellation theory.
	It follows from the work of Wise \cite{Wise:2004ky, Wise:2012fo}, Agol \cite{Agol:2013wr} and Haglund-Wise \cite{Haglund:2008ie} that $G$ and thus $N$ are linear.
	\item The induced morphism $Q \to \out N$ is an isomorphism; see Bumagin-Wise \cite{Bumagin:2005fr}.
\end{itemize}

\paragraph{Distortion.}
Let $H$ be a finitely generated group.
Two length functions $L_1$ and $L_2$ on $H$ are \emph{strongly equivalent} if and only if there exists $C >0$ such that for every $h \in H$, 
\begin{equation*}
	 \frac 1C L_1(h) \leq L_2(h) \leq C L_1(h).
\end{equation*}
If $H$ is a subgroup of a finitely generated group $G$, then the word metric of $G$ restricted to $H$ provides a length function on $H$.
Ol'shanski\u\i\ proved that every length function on $G$ essentially arises in this way \cite{Olshanskii:1999ve}.
Moreover he gives a complete description of the distortion of subgroups of \emph{finitely presented} group.

\begin{theo}[Ol'shanski\u\i\ {\cite[Theorem~2]{Olshanskii:1997ga}}]
\label{res: distorted element}
	Let $H$ be a finitely generated group and $L \colon H \to \N$ a \emph{computable} length function on $H$.
	Then $H$ embeds in a finitely presented group $G$ such that $L$ is strongly equivalent to the word metric of $G$ restricted to $H$.
\end{theo}

\paragraph{Exotic growth.}

\begin{theo}
\label{res: single length function}
	Let $L \colon \Z \to \N$ be computable length function.
	There exist a finitely generated group $N$ and an outer automorphism $\Phi \in \out N$ such that for every non-trivial conjugacy class $c$ of $N$ the map
	\begin{equation*}
		\begin{array}{ccc}
			\N & \to & \R \\
			n & \to & \ln \norm{\Phi^n(c)}
		\end{array}
	\end{equation*}
	is equivalent to $L$.
\end{theo}

\begin{proof}
	According to \autoref{res: distorted element}, there exist a finitely presented group $Q$ and an element $q \in Q$ such that the map $L_q \colon \Z \to \N$ sending $n \to \abs{q^n}$ is equivalent to $L$.
	Using the Rips construction, we produce a short exact sequence
	\begin{equation*}
		1 \to N \to G \to Q \to 1
	\end{equation*}
	where $G$ is a torsion-free hyperbolic group and $N$ a non-elementary finitely generated subgroup.
	Let $\Phi$ be the image of $q$ by the morphism $\chi \colon Q \to \out N$ induced by this short exact sequence.
	Combining Propositions~\ref{res upper bound} and \ref{res: lower bound} we observe that for every non-trivial conjugacy class $c$ of $N$, the map $\N \to \R$ sending $n$ to $\ln \norm{\Phi^n(c)}$ is equivalent to $L_q$ hence to $L$.
\end{proof}

\begin{rema}
	Using one of the aforementioned variations of the Rips construction, one can build a group $N$ as in \autoref{res: single length function} which satisfies one of the following additional assumptions : $N$ has Kazhdan Property (T), $N$ is linear, $N$ is residually finite, etc.
\end{rema}

\begin{rema}
	Ol'shanski\u\i's \autoref{res: distorted element} relies on a rather heavy construction, involving among others a variation on the Higmann embedding theorem.
	In practice though, it is easy to produce \emph{explicit} examples of automorphisms with an exotic behavior.
	Consider for instance the discrete Heisenberg group whose presentation is
	\begin{equation*}
		Q = \left< a,b \mid [a, [a,b]] =1,  [b, [a,b]]=1\right>.
	\end{equation*}
	It is known that the commutator $q = [a,b]$ is distorted.
	More precisely $n \to \abs{q^n}$ behaves like $n \to \sqrt n$.
	Following Rips' construction, one can choose for $G$ the group generated by $a,b,x,y$ subject to the following relations
	\begin{align*}
		axa^{-1} & = xy^{k}x y^{k +1}x y^{k +2} \dots xy^{2k-1}, \\
		a^{-1}xa & = xy^{2k }x y^{2k+1}x y^{2k+2} \dots xy^{3k-1}, \\
		bxb^{-1} & = xy^{3k}x y^{3k+1}x y^{3k+ 2} \dots xy^{4k-1}, \\
		b^{-1}xb & = xy^{4k }x y^{4k+1}x y^{4k+2} \dots xy^{5k-1}, \\
		[a, [a,b]] & = xy^{5k }x y^{5k+1}x y^{5k+2} \dots xy^{6k-1}, \\
		[b, [a,b]] & = xy^{6k }x y^{6k+1}x y^{6k +2} \dots xy^{7k-1},
	\end{align*}
 	where $k$ is a sufficiently large exponent.
	The group $N$ given by \autoref{res: single length function} is then the (normal) subgroup of $G$ generated by $x$ and $y$.
	In particular, if $\Phi$ stands for the image of $q$ under the morphism $\chi \colon Q \to \out N$, then for every non-trivial conjugacy class $c$, the map $n \to \ln \norm{\Phi^n(c)}$ behaves like $n \to \sqrt n$.
	Other \og explicit\fg  examples of distorted elements in a finitely presented group can be found in Gromov \cite[Chapter~3]{Gromov:1993vu} or Bridson \cite{Bridson:1999aa}.
	
\end{rema}

We now prove the theorems announced in the introduction.

\begin{proof}[Proof of \autoref{res: growth types}]
	We write $L_1, \dots, L_m$ for the elements of $\mathcal L$.
	For every $i \in \intvald 1m$ we build using \autoref{res: single length function} a finitely generated group $N_i$ and an automorphism $\Phi_i \in \out{N_i}$ such that for every non-trivial conjugacy class $c$ of $N_i$ the map $\N \to \R$ sending $n$ to $\ln \norm{\Phi_i^n(c)}$ is equivalent to $L_i$.
	We now let
	\begin{equation*}
		G = N_1 \times N_2 \times \dots \times N_m.
	\end{equation*}
	We endow $G$ with the word metric relative to the generating set $S = S_1 \cup \dots \cup S_m$ where $S_i$ is a finite generating set of $N_i$.
	For each $i \in \intvald 1m$, we pick a representative $\phi_i \in \aut{N_i}$ of $\Phi_i$.
	We consider the automorphism $\phi \in \aut G$ whose restriction to the factor $N_i$ is $\phi_i$, and write $\Phi$ for the outer class of $\phi$.
	One checks easily that if $g = (g_1, \dots, g_m)$ is an element of $G$, then for every $n \in \Z$,
	\begin{equation*}
		\norm[G]{\phi^n (g)} = \norm[N_1]{\phi_1^n(g_1)} + \dots + \norm[N_m]{\phi_m^n(g_m)}.
	\end{equation*}
	Let $c$ be the conjugacy class of $g$ in $G$.
	We denote by $I$ the set which consists of all $i \in \intvald 1m$, such that $g_i$ is non-trivial.
	It follows from the previous discussion that $T_c$ is equivalent to 
	\begin{equation*}
		\sum_{i \in I} L_i
	\end{equation*}
	Conversely for every $i \in \intvald 1m$, $L_i$ is equivalent to $T_{c_i}$, where $c_i$ is the conjugacy class in $G$ of any non-trivial element in $N_i$.
\end{proof}

\begin{proof}[Proof of \autoref{res: qi}]
	According to Bumagin-Wise \cite{Bumagin:2005fr} there exists a short exact sequence 
	\begin{equation*}
		1 \to N \to G \to Q \to 1
	\end{equation*}
	where $G$ is a hyperbolic group and $N$ a non-elementary finitely generated subgroup such that the corresponding map $Q \to \out N$ is an isomorphism.
	By \autoref{res: qi embedding}, $Q \to \out N$ is a surjective quasi-isometric embedding, hence a quasi-isometry.
\end{proof}

\bigskip
\noindent
\emph{R\'emi Coulon} \\
Univ Rennes, CNRS \\
IRMAR - UMR 6625 \\
F-35000 Rennes, France\\
\texttt{remi.coulon@univ-rennes1.fr} \\
\texttt{http://rcoulon.perso.math.cnrs.fr}

\todos

\begin{thebibliography}{10}

\bibitem{Agol:2013wr}
I.~Agol.
\newblock {The virtual Haken conjecture}.
\newblock {\em Documenta Mathematica}, 18:1045--1087, 2013.

\bibitem{Belegradek:2008cy}
I.~Belegradek and D.~V. Osin.
\newblock {Rips construction and Kazhdan property (T)}.
\newblock {\em Groups, Geometry, and Dynamics}, 2(1):1--12, 2008.

\bibitem{Bestvina:1992wa}
M.~Bestvina and M.~Handel.
\newblock {Train tracks and automorphisms of free groups}.
\newblock {\em Annals of Mathematics. Second Series}, 135(1):1--51, 1992.

\bibitem{Bridson:1999aa}
M.~R. Bridson.
\newblock Fractional isoperimetric inequalities and subgroup distortion.
\newblock {\em J. Amer. Math. Soc.}, 12(4):1103--1118, 1999.

\bibitem{Bridson:1999ky}
M.~R. Bridson and A.~Haefliger.
\newblock {\em {Metric spaces of non-positive curvature}}, volume 319 of {\em
  Grundlehren der Mathematischen Wissenschaften [Fundamental Principles of
  Mathematical Sciences]}.
\newblock Springer-Verlag, Berlin, 1999.

\bibitem{Bumagin:2005fr}
I.~Bumagin and D.~T. Wise.
\newblock {Every group is an outer automorphism group of a finitely generated
  group}.
\newblock {\em Journal of Pure and Applied Algebra}, 200(1-2):137--147, 2005.

\bibitem{Coornaert:1990tj}
M.~Coornaert, T.~Delzant, and A.~Papadopoulos.
\newblock {\em {G\'eom\'etrie et th\'eorie des groupes}}, volume 1441 of {\em
  Lecture Notes in Mathematics}.
\newblock Springer-Verlag, Berlin, 1990.

\bibitem{Coulon:2019aa}
R.~Coulon, A.~Hilion, C.~Horbez, and G.~Levitt.
\newblock Growth of automorphisms of hyperbolic groups.
\newblock in preparation, 2019.

\bibitem{Farb:2012ws}
B.~Farb and D.~Margalit.
\newblock {\em {A primer on mapping class groups}}, volume~49 of {\em Princeton
  Mathematical Series}.
\newblock Princeton University Press, Princeton, NJ, 2012.

\bibitem{Francaviglia:2011jj}
S.~Francaviglia and A.~Martino.
\newblock {Metric properties of outer space}.
\newblock {\em Publicacions Matem{\`a}tiques}, 55(2):433--473, 2011.

\bibitem{Ghys:1990ki}
{\'E}.~Ghys and P.~de~la Harpe.
\newblock {\em {Sur les groupes hyperboliques d'apr\`es Mikhael Gromov}},
  volume~83 of {\em Progress in Mathematics}.
\newblock Birkh\"auser Boston, Inc., Boston, MA, Boston, MA, 1990.

\bibitem{Gromov:1987tk}
M.~Gromov.
\newblock {Hyperbolic groups}.
\newblock In {\em Essays in group theory}, pages 75--263. Springer, New York,
  New York, 1987.

\bibitem{Gromov:1993vu}
M.~Gromov.
\newblock {Asymptotic invariants of infinite groups}.
\newblock In {\em Geometric group theory, Vol.\ 2 (Sussex, 1991)}, pages
  1--295. Cambridge Univ. Press, Cambridge, 1993.

\bibitem{Haglund:2008ie}
F.~Haglund and D.~T. Wise.
\newblock {Special cube complexes}.
\newblock {\em Geometric and Functional Analysis}, 17(5):1551--1620, 2008.

\bibitem{Levitt:2009hx}
G.~Levitt.
\newblock {Counting growth types of automorphisms of free groups}.
\newblock {\em Geometric and Functional Analysis}, 19(4):1119--1146, 2009.

\bibitem{Matumoto:1989wg}
T.~Matumoto.
\newblock {Any group is represented by an outerautomorphism group}.
\newblock {\em Hiroshima Mathematical Journal}, 19(1):209--219, 1989.

\bibitem{Meinert:2015ce}
S.~Meinert.
\newblock {The Lipschitz metric on deformation spaces of G{\textendash}trees}.
\newblock {\em Algebraic {\&} Geometric Topology}, 15(2):987--1029, 2015.

\bibitem{Olshanskii:1997ga}
A.~Y. Ol'shanskii.
\newblock {On the distortion of subgroups of finitely presented groups}.
\newblock {\em Matematicheski\u\i\ Sbornik}, 188(11):51--98, 1997.

\bibitem{Olshanskii:1999ve}
A.~Y. Ol'shanskii.
\newblock {Distortion functions for subgroups}.
\newblock In {\em Geometric group theory down under (Canberra, 1996)}, pages
  281--291. de Gruyter, Berlin, 1999.

\bibitem{Rips:1982co}
E.~Rips.
\newblock {Subgroups of small cancellation groups}.
\newblock {\em The Bulletin of the London Mathematical Society}, 14(1):45--47,
  1982.

\bibitem{Thurston:1998aa}
W.~P. {Thurston}.
\newblock {Minimal stretch maps between hyperbolic surfaces}.
\newblock {\em arXiv Mathematics e-prints}, page math/9801039, Jan 1998.

\bibitem{Wise:2003bc}
D.~T. Wise.
\newblock {A residually finite version of Rips's construction}.
\newblock {\em The Bulletin of the London Mathematical Society}, 35(1):23--29,
  2003.

\bibitem{Wise:2004ky}
D.~T. Wise.
\newblock {Cubulating small cancellation groups}.
\newblock {\em Geometric and Functional Analysis}, 14(1):150--214, 2004.

\bibitem{Wise:2012fo}
D.~T. Wise.
\newblock {\em {From riches to raags: 3-manifolds, right-angled Artin groups,
  and cubical geometry}}, volume 117 of {\em CBMS Regional Conference Series in
  Mathematics}.
\newblock Published for the Conference Board of the Mathematical Sciences,
  Washington, DC; by the American Mathematical Society, Providence, RI,
  Providence, Rhode Island, 2012.

\end{thebibliography}
\end{document}